\documentclass[11pt, letterpaper]{amsart}
\usepackage{graphicx, amssymb, color}
\usepackage{hyperref}

\addtolength{\hoffset}{-1.9cm}
\addtolength{\textwidth}{3.8cm}
\addtolength{\voffset}{-0.7cm}
\addtolength{\textheight}{1.4cm}


\vfuzz2pt 
\hfuzz2pt 
\newtheorem{thm}{Theorem}[section]

\newtheorem{lem}[thm]{Lemma}

\theoremstyle{definition}

\theoremstyle{remark}
\newtheorem{rem}[thm]{Remark}

\numberwithin{equation}{section}

\newcommand{\abs}[1]{\left\vert#1\right\vert}
\newcommand{\set}[1]{\left\{#1\right\}}
\newcommand{\Real}{\mathbb R}
\newcommand{\Natural}{\mathbb N}

\newcommand{\such}{\ | \ }
\newcommand{\tir}{t \in \Real_+}

\newcommand{\prob}{\mathbb{P}}
\newcommand{\Exp}{\mathcal E}

\newcommand{\expec}{\mathbb{E}}

\newcommand{\F}{\mathcal{F}}

\newcommand{\cadlag}{c\`adl\`ag}

\newcommand{\ud}{\mathrm d}
\newcommand{\inner}[2]{\left \langle #1 , #2 \right \rangle}

\newcommand{\Pre}{\mathcal{P}}
\newcommand{\Y}{\mathcal{Y}}

\newcommand{\pare}[1]{\left(#1\right)}
\newcommand{\bra}[1]{\left[#1\right]}
\newcommand{\dbra}[1]{[\kern-0.15em[ #1 ]\kern-0.15em]}
\newcommand{\dbraco}[1]{[\kern-0.15em[ #1 [\kern-0.15em[}
\newcommand{\dbraoc}[1]{]\kern-0.15em] #1 ]\kern-0.15em]}

\newcommand{\Q}{\mathcal{Q}}

\newcommand{\bF}{\mathbf{F}}

\newcommand{\indic}{\mathbf{1}}

\newcommand{\nin}{n \in \Natural}
\newcommand{\kin}{k \in \Natural}

\newcommand{\iii}{i \in \set{1, \ldots, d} }
\newcommand{\jii}{j \in \set{1, \ldots, d} }
\newcommand{\dfn}{ := }

\newcommand{\hY}{\widehat{Y}}

\newcommand{\hV}{\widehat{V}}

\newcommand{\Lc}{\mathcal{L}^\mathsf{d}}

\newcommand{\cA}{\check{A}}

\begin{document}

\title[optional decomposition for continuous semimartingales under arbitrary filtrations]{Optional decomposition for continuous semimartingales under arbitrary filtrations}%

\author{Ioannis Karatzas}
\thanks{Research   supported in part by  the National Science Foundation under  grant   NSF-DMS-14-05210, and the Marie Curie Career Integration Grant (FP7-PEOPLE-2012-CIG), project number 334540. We are greatly indebted to Tomoyuki Ichiba and Johannes Ruf, for their careful reading of the manuscript and their many suggestions. 
	}  %
\address{Ioannis Karatzas: Department of Mathematics, Columbia University, New York, NY 10027, USA;   Intech Investment Management, One Palmer Square, Suite 441, Princeton, NJ 08542, USA}%
\email{ik@math.columbia.edu, ik@enhanced.com}%

\author{Constantinos Kardaras}%
\address{Constantinos Kardaras: Department of Statistics, London School of Economics,   10 Houghton Street, London, WC2A 2AE, England}%
\email{k.kardaras@lse.ac.uk}%

\subjclass{60H05; 60H30; 91B28}%
\keywords{Semimartingales; optional decomposition;   local martingale deflators}%

\date{\today}%
\begin{abstract}
We present an elementary treatment of the Optional Decomposition Theorem  for continuous semimartingales and general filtrations. This treatment does not assume the existence of   equivalent local martingale measure(s), only that of   strictly positive local martingale  deflator(s). 
\end{abstract}

\maketitle


\section*{Introduction} \label{sec:intro}

The \emph{Optional Decomposition Theorem} (ODT) is an important result in the field of Stochastic Analysis, and more particularly in Mathematical Finance. In one of its most ``classical'' forms, following \cite{MR1402653}, the ODT can be stated as follows. For some $d \in \Natural$, let $X$ be a $\Real^d$-valued locally bounded semimartingale on a given filtered probability space $(\Omega, \mathcal{F}, \mathbb{P})$,    $\bF = \{ \mathcal{F} (t) \}_{\tir}$, and assume that $\Q\,$, the collection  of probability measures that are equivalent to $\,\mathbb{P}\,$ and bestow the local martingale property to $X$,  is non-empty. Then, a given nonnegative process $V$ is a supermartingale under \emph{all} probabilities in $\Q$, if and only if it admits the ``optional'' decomposition 
\begin{equation}  \label{eq:OD} \tag{OD}
V = V(0) + \int_0^\cdot \inner{H(t)}{ \ud X(t)} - C\,;
\end{equation}
here $H$ is a predictable $X$-integrable process, and $C$ is a nondecreasing right-continuous adapted process with $C(0) = 0$.

The representation \eqref{eq:OD} is relevant in the setting of Mathematical Finance. Indeed, suppose the components of $X$ represent (discounted) prices of assets in a financial market. If $H = (H_i)_{\iii}$ is the investment strategy of an agent in the market, where $H_i$ stands for the units of asset $i$ held in the portfolio for all $\iii$, and $C$ measures the agent's aggregate consumption, then $V$ in \eqref{eq:OD} corresponds to the generated wealth-consumption process starting with initial capital $V(0)$. The ODT offers an illuminating ``dual'' characterization of all such wealth-consumption processes, as supermartingales under all \emph{equivalent local martingale measures}   
of $X$. Using this characterization, the ODT establishes the \emph{superhedging duality} via use of dynamic programming techniques in   non-Markovian settings.

Stochastic controllability results, similar to the ODT and obtained via martingale methods, can be traced as far back as \cite{MR0319642} and, in the context of Mathematical Finance, to \cite{MR1089152}. A version of the ODT when $X$ is driven by Brownian motion under quasi-left-continuous filtrations appears in \cite{MR1311659}. 
The first paper to treat the ODT for general locally bounded semimartingales  is \cite{MR1402653}, where functional (convex) analytic methods and results from \cite{MR1304434} were employed. In  \cite{MR1469917}, the more general case of constraints on investment is considered, using essentially similar arguments. In \cite{MR1804665}, the assumption of local boundedness on the semimartingale integrator $X$ is dropped; more importantly, the authors avoid infinite-dimensional convex analysis, by following an alternative approach via predictable characteristics; this involves Lagrange multipliers,     separating hyperplane arguments in Euclidean space, and measurable selections. Although the treatments of the ODT in the aforementioned papers are quite general, they do require a significant level of sophistication
; indeed, they  involve either use of difficult functional-analytic results, or deep knowledge of the General Theory of   Processes as presented,  e.g., in \cite[Chapters I and II]{MR1943877}.

The present paper offers a rather elementary proof of the ODT for continuous-path semimartingale integrator $X$ but \emph{arbitrary} filtrations. Instead of assuming that $\Q \neq \emptyset$, it uses the more ``localized'' assumption that the class  $\Y$ of   \emph{strictly positive local martingale deflators} is non-empty. This assumption $\Y \neq \emptyset$ is both more general and more descriptive: it  allows for an equivalent structural characterization of its validity by inspecting the local drift and local covariation processes of $X$, as mentioned in Theorem \ref{thm:struct}. (In fact, \cite{MR1651229} treats the ODT using the condition $\Y \neq \emptyset$.) The important pedagogical element of the paper is that it avoids use of functional analysis and predictable characteristics in order to obtain the ODT. Since arbitrary filtrations support local martingales with potential jumps at both accessible and totally inaccessible times, it is impossible to  avoid entirely the use of certain results from the general theory of Stochastic Processes. However, we feel that the path taken here is as elementary as possible. Although some intersection with previous work exists (notably, \cite{MR1311659}, as well as \cite{MR2995513} which deals with continuous asset prices and continuous filtrations), we believe that the present treatment is more straightforward.

\section{The Setting} \label{sec:set}

\subsection{Preliminaries}
We shall work on a   probability space $(\Omega, \mathcal{F}, \mathbb{P})$, endowed with a filtration  $\bF = \{ \mathcal{F} (t) \}_{\tir}$  which satisfies the usual hypotheses of right-continuity and augmentation by null sets. We stress that no further assumption is made on the filtration.

Let $X = (X_i)_{\iii}$ be a $d$-dimensional semimartingale with continuous paths. We write $X = A + M$ for the Doob-Meyer decomposition of $X\,$; here $A$ is a $d$-dimensional    process with continuous paths of finite variation and $A(0) = 0$, and $M$ is a  $d$-dimensional local martingale with continuous paths. 

For $\iii$, we shall denote by $\cA_i$ the  process of finite first variation associated with $A_i$. Upon defining $G \dfn \sum_{i =1}^d \pare{\cA_i + [M_i, M_i]}$, it follows that there exist a $d$-dimensional predictable process $a$, and a predictable process $c$ taking values in the set of nonnegative-definite matrices, such that $\,A = \int_0^\cdot a(t) \ud G(t)\,$ and $\,[M_i, M_j] = \int_0^\cdot c_{ij}(t) \ud G(t)\,$ hold for $\iii$ and $\jii$.

\smallskip

We shall denote by $\,\Pre\,$   the predictable $\sigma$-field on $\Omega \times \Real_+$, and by $\,\prob \otimes G\,$ the measure on the product measurable space $\pare{\Omega \times \Real_+, \Pre}$ which satisfies $(\prob \otimes G) [J] = \expec \bra{ \int_0^\infty \indic_J (t) \ud G(t)}$ for all $J \in \Pre$.

Let $\Pre(X)$ denote the collection of all $d$-dimensional, predictable and $X$-integrable processes. A given     $d$-dimensional predictable  
process $H$ belongs to $\Pre(X)$ if and only if both processes $\int_0^\cdot \abs{ \inner{H(t)}{a(t)} } \ud G(t)$ and $\int_0^\cdot \inner{H(t)}{c(t)H(t)} \ud G(t)$ are finitely-valued.

 \smallskip
We shall be using the   notation  
\[
\Exp(Z)\,:=\, \exp \Big( Z - \frac{1}{\,2\,} \big[ Z, Z \big]^c \Big)\cdot \prod_{ t \,\le \,\cdot} \big( 1 + \Delta Z (t) \big)  \exp \pare{ - \Delta Z (t)}  
\]
for the \textsl{stochastic exponential} of a scalar semimartingale $Z$ with $Z(0)=0\,$; note that it satisfies the   integral equation
$\, 
\Exp(Z)    \,=\, 1 + \int_0^{\, \cdot} \Exp(Z) (t-) \, \mathrm{d} Z(t)\,$.

\subsection{Strictly positive local martingale deflators} 

We     define $\,\Y  \,$ as the collection of all \emph{strictly positive} local martingales $Y$ with $Y(0) = 1$, such that $\,Y X^i\,$ is a local martingale for all $\iii$. The next result gives conditions on the drift and local covariance structure of $X$ which are equivalent to the requirement of non-emptiness for $\Y$.

\begin{thm} \label{thm:struct}
In the above set-up, the following two  conditions are equivalent:
\begin{enumerate}
	\item $\Y \neq \emptyset$.
	\item There exists a $d$-dimensional, predictable process $\rho$, such that $a = c \rho$ holds $\pare{\prob \otimes G}$-a.e. and the process $\int_0^\cdot \inner{\rho(t)}{c(t) \rho(t)} \ud G(t)$ is   finitely-valued.
\end{enumerate}
\end{thm}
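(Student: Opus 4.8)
The plan is to prove the equivalence in both directions, with the ``hard'' direction being $(1) \Rightarrow (2)$, which requires a measurable selection argument.

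\textbf{Direction $(2) \Rightarrow (1)$.} Suppose $\rho$ exists as stated. Consider the scalar semimartingale $N \dfn - \int_0^\cdot \inner{\rho(t)}{\ud M(t)}$, which is a well-defined continuous local martingale because $[N,N] = \int_0^\cdot \inner{\rho(t)}{c(t)\rho(t)} \ud G(t)$ is finite by hypothesis. Set $Y \dfn \Exp(N)$; since $N$ is continuous, $Y = \exp(N - \frac{1}{2}[N,N])$ is strictly positive with $Y(0)=1$, and $Y$ is a local martingale. It remains to check that $Y X^i$ is a local martingale for each $i$. By integration by parts, $\ud(Y X^i) = Y(t-)\,\ud X^i(t) + X^i(t-)\,\ud Y(t) + \ud[Y, X^i](t)$; the middle term is a local-martingale differential, and $\ud[Y,X^i] = Y(t-)\,\ud[N, M^i](t) = -Y(t)\sum_j \rho_j(t) c_{ij}(t)\,\ud G(t) = -Y(t)\,(c(t)\rho(t))_i\,\ud G(t) = -Y(t)\,a_i(t)\,\ud G(t) = -Y(t)\,\ud A^i(t)$, using $a = c\rho$ $(\prob \otimes G)$-a.e. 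Hence $\ud(YX^i) = Y(t-)\,\ud M^i(t) + X^i(t-)\,\ud Y(t)$, a sum of local-martingale differentials, so $YX^i$ is a local martingale. Thus $Y \in \Y$, so $\Y \neq \emptyset$.

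\textbf{Direction $(1) \Rightarrow (2)$.} Suppose $Y \in \Y$. Since $Y$ is a strictly positive local martingale with $Y(0)=1$, it can be written as $Y = \Exp(L)$ for a local martingale $L$ with $L(0)=0$; decompose $L = L^c + L^d$ into its continuous and purely discontinuous parts. The continuous part $L^c$ is a continuous local martingale, so by the Kunita--Watanabe / Galtchouk--Kunita--Watanabe decomposition relative to $M$ (or directly, since we only need the covariation structure) there is a predictable process $-\rho$ with $[L^c, M^i] = -\int_0^\cdot (c(t)\rho(t))_i \,\ud G(t)$ and $\int_0^\cdot \inner{\rho(t)}{c(t)\rho(t)}\ud G(t) < \infty$; more carefully, one obtains $\rho$ by a pointwise-in-$(\omega, t)$ measurable selection, choosing for $(\prob\otimes G)$-a.e.\ $(\omega,t)$ a vector $\rho(\omega,t)$ solving the linear system $a(\omega,t) = c(\omega,t)\rho(\omega,t)$ in the column space of $c(\omega,t)$ (the minimal-norm solution, say), this being measurable by standard selection theorems. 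The identity $a = c\rho$ forces the drift of $YX^i$ to vanish: applying integration by parts to $YX^i$ as above but now with jumps, and using that $YX^i$ is a local martingale while $Y$ and $M^i$ are local martingales, the finite-variation part must be zero, which upon localization and inspection of the $\ud G$-density yields $Y(t-)(a_i(t) - (c(t)\rho(t))_i) = 0$ $(\prob\otimes G)$-a.e.; since $Y(t-) > 0$, we get $a = c\rho$. Finiteness of $\int_0^\cdot \inner{\rho}{c\rho}\ud G$ follows because this process equals $[\widetilde{L}, \widetilde{L}]$ for $\widetilde{L} \dfn -\int_0^\cdot \inner{\rho(t)}{\ud M(t)}$, which is finite as $\rho$ was chosen to make it so, or alternatively because it is dominated by the (finite) quadratic variation of $L^c$.

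\textbf{Main obstacle.} The delicate point is the $(1)\Rightarrow(2)$ direction: extracting a \emph{predictable} process $\rho$ satisfying the linear relation $a = c\rho$ $(\prob\otimes G)$-a.e., with the integrability $\int_0^\cdot \inner{\rho}{c\rho}\ud G < \infty$. One cannot simply invert $c$, which may be singular; instead one must argue that, $(\prob\otimes G)$-a.e., $a(\omega,t)$ lies in the range of $c(\omega,t)$ (this is where one uses that $Y \in \Y$ controls the drift), and then invoke a measurable selection theorem to pick $\rho$ predictably --- e.g.\ the Moore--Penrose pseudoinverse $\rho = c^+ a$, which is a Borel function of $(a,c)$ hence predictable. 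Handling the jumps of $Y$ correctly (they do not affect the continuous covariation with $M$, but must be accounted for when equating finite-variation parts) is a secondary technical care point.
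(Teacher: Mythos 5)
Your direction $(2)\Rightarrow(1)$ is essentially the paper's: the deflator $Y = \Exp\big(-\int_0^\cdot \inner{\rho(t)}{\ud M(t)}\big)$ you build is exactly the paper's $1/\hV$ (the two expressions coincide, since $\inner{\rho}{a}=\inner{\rho}{c\rho}$ makes $1/\Exp\big(\int \inner{\rho}{\ud X}\big) = \Exp\big(-\int\inner{\rho}{\ud M}\big)$), and the integration-by-parts verification that $YX^i$ is a local martingale is the same computation.

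Your direction $(1)\Rightarrow(2)$ is genuinely different from the paper's. The paper does not actually prove this implication; it cites \cite[Section 4]{MR2732838} and only sketches the contrapositive in two cases (if $a\notin \mathrm{range}(c)$ on a set of positive measure one exhibits $\zeta$ with $c\zeta=0$ producing a nondecreasing, nontrivial ``riskless'' gain, contradicting $\Y\neq\emptyset$; if $\rho$ exists but the energy integral explodes, truncations $\rho\indic_{\{|\rho|\le n\}}$ give wealth processes unbounded in probability, again contradicting $\Y\neq\emptyset$). You instead argue directly: write $Y=\Exp(L)$, take the continuous part $L^c$, apply the Galtchouk--Kunita--Watanabe decomposition $L^c = -\int_0^\cdot\inner{\rho}{\ud M} + N$ with $[N,M]=0$, and deduce $a=c\rho$ from the vanishing of the predictable finite-variation part of $YX^i$. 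This direct route is correct and, I would say, cleaner than the contrapositive. It also delivers the integrability $\int_0^\cdot\inner{\rho}{c\rho}\,\ud G \le [L^c,L^c] < \infty$ for free, since $[L^c,L^c]=[\widetilde L,\widetilde L]+[N,N]$.

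However, your exposition muddles this clean argument with an unnecessary detour, and in a way that reads as if there were a gap. Once you take $\rho$ from the GKW decomposition, the relation $a=c\rho$ \emph{follows} from the local-martingale property of $YX^i$ together with uniqueness of the canonical decomposition; there is no need to first establish that $a$ lies in the range of $c$, and no need for any measurable-selection or Moore--Penrose step. Your ``Main obstacle'' paragraph therefore identifies a non-obstacle: the range-inclusion issue is precisely what disappears when one uses GKW to produce $\rho$ rather than trying to solve the linear system pointwise. Relatedly, the sentence ``The identity $a=c\rho$ forces the drift of $YX^i$ to vanish'' has the causality backward; what your computation actually shows is that the drift of $YX^i$ vanishing \emph{forces} $Y(t-)\big(a_i-(c\rho)_i\big)=0$ $(\prob\otimes G)$-a.e., whence $a=c\rho$ since $Y_->0$. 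If you commit cleanly to the GKW route and drop the pseudoinverse/selection digression, you have a complete and self-contained proof of $(1)\Rightarrow(2)$, unlike the paper, which only gives a proof-by-reference and a contrapositive sketch.

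Two small additional points worth making explicit in your argument: (a) since $X$ is continuous, $[L,M^i]=[L^c,M^i]$, so the jump part $L^d$ genuinely plays no role (you gesture at this but it should be stated, as it is why the GKW of $L^c$ alone suffices); and (b) the identification of the finite-variation part requires that $\int_0^\cdot Y(t-)\big(a_i-(c\rho)_i\big)\,\ud G$ is a predictable finite-variation process, which holds because $Y_-$ is locally bounded and both $\int|a_i|\,\ud G$ and the total variation of $[L^c,M^i]$ are finite.
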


The structural conditions in statement (2) of Theorem \ref{thm:struct} have appeared previously---see, for example, \cite{MR1353193} or \cite[Theorem 4.2, page 12]{MR1640352}.  
A proof of Theorem \ref{thm:struct} can be found in \cite[Section 4]{MR2732838}. We shall not repeat it here, but will provide some discussion in order to introduce   important quantities that will be used later on.

\subsection{Discussion of Theorem \ref{thm:struct}}

Let us start by assuming that condition (2) of Theorem \ref{thm:struct} holds. Since $a = c \rho$ implies that  $\inner{\rho}{a} = \inner{\rho}{c\rho} = \abs{\inner{\rho}{a}}$ holds $\pare{\prob \otimes G}$-a.e., it follows that $\rho$ is $X$-integrable, i.e., $\rho \in \Pre(X)$. Then,   the continuous-path semimartingale
\begin{equation} \label{eq:defl}
\hV \,\dfn  \,\Exp \pare{\int_0^\cdot \inner{\rho(t)}{\ud X(t)}} 
\end{equation}
is well-defined and satisfies the integral equation 
\begin{equation}
\label{eq:IE}
\hV \,=\, 1 + \int_0^\cdot \hV (t) \inner{\rho (t)}{\ud X(t)}.
\end{equation}
Straightforward computations  show now that $(1 / \hV)$ is a local martingale, as is $(X^i / \hV)$ for all $\iii$; consequently, $(1 / \hV) \in \Y$. In fact, whenever $L$ is a local martingale  with    $L(0) = 0$, $\Delta L > - 1$ and $[L, M] = 0\,$, the product $\big(1 / \hV \big)\, \Exp(L)$ is an element of $\Y\,$. Although we shall not make direct use of this fact, let us also note that every element  of $\Y\,$ is of  this product  form. 

\smallskip
 The argument of the preceding paragraph establishes the implication $(2) \Rightarrow (1)$ in Theorem \ref{thm:struct}. For completeness, we    discuss now briefly, how the failure of condition (2) implies the failure of condition (1) in Theorem \ref{thm:struct};   detailed arguments can be found in  \cite[Section 4]{MR2732838}. Two contingencies need to be considered:

\smallskip
\noindent
 {\it (i)   The vector     $a$   fails to be in the range of the matrix    $c\,$, on a predictable set $E$ of strictly positive $\pare{\prob \otimes G}$-measure.} In this case one can find $\,\zeta \in \Pre(X)\,$, such that $\,c \, \zeta = 0\,$ and the process $\int_0^\cdot \inner{\zeta (t)}{\ud X(t)} = \int_0^\cdot \inner{\zeta (t)}{a(t)} \ud G(t)$ is   nondecreasing everywhere and eventually strictly positive on $E$. This implies in a straightforward way that $\Y = \emptyset$. 
 
 \smallskip
\noindent
 {\it (ii)  \,A $d$-dimensional predictable process $\rho$ exists, so  that $a = c \rho$ holds $\pare{\prob \otimes G}$-a.e.; but the event $\big\{\int_0^T \inner{\rho(t)}{c(t) \rho(t)} \ud G(t) = \infty \big\}$ has positive probability for some $T > 0$.} Then, upon noting that $\rho \indic_{\set{|\rho | \leq n}} \in \Pre(X)$ holds  for all $\nin$, and defining $V_n = \Exp \pare{\int_0^\cdot \inner{\rho(t) \indic_{\set{|\rho(t)| \leq n}}}{\ud X(t)}}$, one can show that the collection $\set{V_n(T) \such \nin}$ is   unbounded in probability. This  
again implies   $\Y = \emptyset \,$. 

Indeed, if $\Y\,$ were not empty, then for any $Y \in \Y$   it would be straightforward to check that $Y V_n$ is a nonnegative local martingale---thus, a supermartingale---for all $\nin$. By Doob's maximal inequality, this would imply that $\set{Y(T) V_n(T) \such \nin}$ is bounded in probability;  and  since $Y$ is strictly positive, that $\set{V_n(T) \such \nin}$ is also bounded in probability. But we have already seen that the opposite is true.

\begin{rem}
\label{rem:less_check}
{\it A Reduction:} 
Under condition (2) of Theorem \ref{thm:struct},    a given     $d$-dimensional and predictable process $H$ belongs to $\Pre(X)$ if and only if $\, \int_0^\cdot \inner{H(t)}{c(t)H(t)} \ud G(t)\,$ is finitely-valued. Indeed, the Cauchy-Schwartz inequality (see also \cite[Proposition 3.2.14]{MR1121940}) and the $(\prob \otimes G)$-identity $\,a = c \rho\,$ imply then 
\[
\int_0^\cdot \abs{ \inner{H(t)}{a(t)} } \,\ud \, G(t) \leq \pare{\int_0^\cdot \inner{H(t)}{c(t) H(t)} \ud G(t)}^{1/2} \pare{\int_0^\cdot \inner{\rho(t)}{c(t) \rho(t)} \ud G(t)}^{1/2},
\]
and show that $\int_0^\cdot \abs{ \inner{H(t)}{a(t)} } \ud G(t)$ is  \emph{a fortiori} finitely-valued. 

In obvious notation,   we have $\Pre(X) = \Pre(M)$ under the condition (2) of Theorem \ref{thm:struct}.
\end{rem}

\begin{rem}
{\it An Interpretation:} 
 It follows from \eqref{eq:IE} that  
 the process  $\hV$ can be interpreted as the strictly positive wealth  generated by the predictable process  $\rho$ viewed as a ``portfolio'', starting with a unit of capital. 
The components of $X$ represent then the \emph{returns} of  the various assets in an equity  market;   
the strictly positive processes $S_i = \mathcal{E} (X_i) $ are the {\it prices} of these assets;  the components of $\rho$ stand for the {\it proportions} of current wealth  invested in each one of these assets; whereas the scalar processes $\, \vartheta_i = (\hV / S_i) \rho_i\,$ (respectively, $\, \eta_i =  \hV    \rho_i$) keep track of the numbers of {\it shares} (resp., amounts of currency) invested in the various assets. 
\end{rem}

\subsection{Optional Decomposition Theorem} 
\label{sec:odt}

The following is the main result of this work. It is   proved    in Section \ref{sec:proof}.

\begin{thm} 
\label{thm:odt}
Assume that $\Y \neq \emptyset$. Let $V$ be an adapted \cadlag \ process,   
locally bounded from below. Then, the following statements are equivalent:
\begin{enumerate}
	\item The product $Y V$ is a local supermartinagle, for all $\,Y \in \Y$.
	\item The process $V$ is of the form 
	\begin{equation}
\label{eq:dec}
V\, = \,V(0) + \int_0^{\,\cdot} \inner{H(t)}{\ud X(t)} -  C\,,
\end{equation}
	 where $H \in \Pre(X)$   
	 and $C$ is a nondecreasing,  right-continuous and adapted process with $C(0) = 0$, which  is locally bounded from above.
\end{enumerate}
\end{thm}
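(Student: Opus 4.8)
The plan is to establish the two implications of Theorem~\ref{thm:odt} separately, with the bulk of the effort going into $(1) \Rightarrow (2)$.

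\textbf{The easy direction $(2) \Rightarrow (1)$.}
Suppose $V$ has the form \eqref{eq:dec}. Fix $Y \in \Y$. By definition $Y$ and each $Y X^i$ are local martingales, so by integration by parts (bilinearity of the stochastic integral and the fact that $Y$ is a deflator) one checks that $Y \int_0^\cdot \inner{H(t)}{\ud X(t)}$ is a local martingale whenever $H \in \Pre(X)$ --- this is the standard ``numéraire invariance'' computation, which in the continuous-path case amounts to noting that $\ud(Y \cdot \int H \, \ud X) = (\cdots)\,\ud(\text{local mart.}) + \inner{H}{\ud(Y X - X_- \cdot Y - \dots)}$ telescopes correctly because $Y X^i$ is a local martingale for every $i$. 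Since $Y > 0$ and $C$ is nondecreasing with $C(0)=0$, the term $-Y \cdot C$ (more precisely, the contribution of $C$ after integration by parts, $-\int Y_-\,\ud C - [Y,C]$, but $[Y,C]$ needs care since $C$ may jump; one uses instead that $Y C$ is the difference of $\int C_-\,\ud Y$, a local martingale, and $\int Y_-\,\ud C \geq 0$ nondecreasing) yields a nonincreasing ``drift''. Hence $Y V$ is a local martingale minus a nondecreasing process, i.e.\ a local supermartingale. The local boundedness from below of $V$ and from above of $C$ make all the localizations legitimate.

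\textbf{The hard direction $(1) \Rightarrow (2)$.}
This is the real content. I would proceed as follows. First, using Remark~\ref{rem:less_check}, invoke condition (2) of Theorem~\ref{thm:struct} to fix a predictable $\rho$ with $a = c\rho$, and recall $\Pre(X) = \Pre(M)$, so it suffices to integrate against the continuous local martingale part $M$. Next, reduce to the case where $V$ itself is already ``nice'': by localization (stopping times reducing $V$ from below, reducing all relevant local (super)martingales, and stopping when $C$-type processes get large) one may assume $V$ is bounded below by a constant, that various local martingales are true martingales, etc. Then form, for $Y = 1/\hV \in \Y$, the process $\hV \, V$; by hypothesis this is a local supermartingale with continuous-path martingale part, hence by the Doob--Meyer decomposition $\hV V = \hV(0) V(0) + N - B$ with $N$ a local martingale and $B$ nondecreasing predictable, $B(0)=0$. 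The key is to run this for \emph{all} $Y \in \Y$ and extract from the resulting family of inequalities a \emph{single} integrand $H$. Concretely: apply the supermartingale property of $YV$ with the rich family $Y = (1/\hV)\,\Exp(L)$, $L$ ranging over local martingales strongly orthogonal to $M$ with $\Delta L > -1$; varying $L$ forces the martingale part of $V$ (after suitable change of measure / deflation) to have no component orthogonal to $M$, i.e.\ the local martingale part of $V$ must be a stochastic integral $\int \inner{H}{\ud M}$ for some $H \in \Pre(M) = \Pre(X)$. This orthogonality/representation step --- showing that the driving noise of $V$ lies in the space spanned by $M$ --- is where one unavoidably touches the general theory (Kunita--Watanabe decomposition of a local martingale relative to the stable subspace generated by $M$, valid under arbitrary filtrations); here is where I expect the main obstacle, since $V$'s martingale part may a priori have jumps (at accessible or totally inaccessible times) that must be shown to vanish, using that $V$ is \cadlag{} but $X$ is continuous and the deflators $\Exp(L)$ can be chosen to ``test'' against purely discontinuous martingales.

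\textbf{Identifying $H$ and the drift.}
Once the local martingale part of $V$ is written as $\int_0^\cdot \inner{H(t)}{\ud X(t)}$ with $H \in \Pre(X)$ (legitimate because the finite-variation contribution $\int \inner{H}{a}\,\ud G = \int \inner{H}{c\rho}\,\ud G$ is controlled by Cauchy--Schwarz as in Remark~\ref{rem:less_check}), set $C \dfn V(0) + \int_0^\cdot \inner{H(t)}{\ud X(t)} - V$, which is right-continuous, adapted, with $C(0) = 0$. It remains to prove $C$ is nondecreasing and locally bounded from above. Nondecrease follows by testing against the deflator $1/\hV$: the supermartingale property of $(1/\hV)V$ together with the already-established form of the martingale part forces the predictable finite-variation part of $(1/\hV)\,(V(0) + \int \inner{H}{\ud X} - V) = (1/\hV)\,C$ to be nondecreasing, and since $1/\hV > 0$ has continuous paths of finite variation on compacts, one transfers this to $C$ itself (an integration-by-parts / change-of-variables argument: $C = \int \hV\,\ud((1/\hV)C) + \int (1/\hV)C \cdot \ud\hV + [\dots]$, and one checks each piece has the right sign or cancels). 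Local boundedness from above of $C$ comes from that of $V$ from below together with local boundedness of $\int \inner{H}{\ud X}$ (which holds after our localization). Assembling these gives \eqref{eq:dec}, completing $(1) \Rightarrow (2)$ and hence the theorem.
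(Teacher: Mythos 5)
The easy direction $(2)\Rightarrow(1)$ is fine and essentially matches the paper's computation. The real problem is in your plan for $(1)\Rightarrow(2)$, where there is a genuine conceptual gap.

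You propose to decompose $U=V/\hV$ as Doob--Meyer plus Kunita--Watanabe and then to show, by testing against deflators $(1/\hV)\,\Exp(L)$ with $L$ orthogonal to $M$, that the entire component of the local martingale part of $U$ orthogonal to $M$ ``must be shown to vanish.'' This is false, and no amount of testing will make it true, because the optional decomposition genuinely requires that the purely discontinuous local martingale part of $U$ be \emph{nonzero} whenever the consumption $C$ jumps. Concretely: take $X=W$ a Brownian motion in a filtration that also carries an independent Poisson process $N^P$, so $\hV=1$, and set $V=-N^P$. Then $YV$ is a local supermartingale for every $Y\in\Y$, yet the local martingale part of $V$ is $-\widetilde{N}^P$ (the compensated Poisson process), which is orthogonal to $M=W$ and certainly not zero. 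In the optional decomposition one has $H=0$ and $C=N^P$, a nondecreasing adapted process that is not predictable, whose martingale part accounts precisely for the ``orthogonal'' noise you are trying to kill. Your route would instead recover the Doob--Meyer compensator $A_C=t$, which is a different (predictable) decomposition with a nontrivial leftover local martingale, and there the argument would stall.

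The paper's key insight, which your plan is missing, is to group the purely discontinuous local martingale part of $U$ \emph{together} with the Doob--Meyer compensator into a single local submartingale $B$ satisfying $[B,L]=0$ for all continuous local martingales $L$, and then to prove via Lemma~\ref{lem:key} that this combined process $B$ is nondecreasing. The testing with $\Exp(L)$ is indeed used, but to establish that $B+[B,L]$ is a local submartingale for all purely discontinuous $L\in\Lc$ (the hypothesis of Lemma~\ref{lem:key}), not to show $B$'s martingale part vanishes. What does vanish, and what your plan also needs to separate out carefully, is the \emph{continuous} orthogonal martingale component $N$; the paper handles this by a different, simpler test using $\Exp(nN)$ and letting $n\to\infty$. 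Your proposal conflates the continuous-orthogonal and purely-discontinuous-orthogonal components, and the treatment they require is opposite: the former is annihilated, the latter is absorbed into $C$.
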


\begin{rem}
{\it On Uniqueness:} 
In the present setting, the decomposition \eqref{eq:dec} of $V$ is unique in the following sense: If $V\, = \,V(0) + \int_0^{\,\cdot} \inner{H'(t)}{\ud X(t)} -  C'$ holds along with \eqref{eq:dec} for $H' \in \Pre(X)$ and $C'$ a nondecreasing,  right-continuous and adapted process with $C(0) = 0$, then $C= C'$ and $\int_0^{\,\cdot}\inner{H(t)}{\ud X(t)} = \int_0^{\,\cdot}\inner{H'(t)}{\ud X(t)}$ hold modulo evanescence. 

Indeed,  let $\hY \dfn 1 / \hV$, $D = C' - C$, $F \dfn   H' -H$, and note that $\hY D = \hY \int_0^{\,\cdot}\inner{F(t)}{\ud X(t)}$ is a continuous-path local martingale. This follows from integration-by-parts on the right-hand side of the last equality, the equation \eqref{eq:IE}  for $\hV$, and property (2) in Theorem \ref{thm:struct} for $\rho$. (For the latter local martingale property, see also the proof of the implication $(2) \Rightarrow (1)$ in Theorem \ref{thm:odt} in the beginning of \S \ref{subsec:proof_odt}.)  
Integrating by parts again, we see that $\int_0^\cdot \hY(t) \, \ud D(t) = \hY D - \int_0^\cdot D(t-) \,\ud \hY(t)$ is both a continuous-path local martingale and a finite-variation process, which implies   $\int_0^\cdot \hY(t) \,\ud D(t) = 0$ modulo evanescence. Since $\hY$ is strictly positive, the last fact implies that $D = 0$ holds modulo evanescence,   completing the argument. 

Finally, let us note that for the integrands $H$ and $H'$  we may only conclude that $H = H'$ holds in the $\pare{\prob \otimes G}$-a.e. sense.   
\end{rem}

\section{Proof of Theorem \ref{thm:odt}} \label{sec:proof}

We  shall  denote by $\mathcal{L}^\mathrm{c}$ the collection of all local martingales $L$ with continuous paths and $L (0) = 0\,$; furthermore, we  shall  denote by $\Lc$    the collection of all  local martingales $L$ with  $\Delta L > - 1$ and $L (0) = 0\,$ which are purely discontinuous, i.e., satisfy $[L,\Lambda]\equiv0$ for all $\Lambda \in  \mathcal{L}^\mathrm{c}$. 

\subsection{An intermediate result}

In order to prove Theorem \ref{thm:odt}, we first isolate the result that will enable us   eventually to deal only with continuous-path local martingales. 

\begin{lem} \label{lem:key}
Let $B$ be a locally bounded from above semimartingale  with the following properties:
\begin{itemize}
	\item $B + [B, L]$ is a local submartingale, for every    $L \in \Lc$. 
	\item $[B, L] = 0$, for every   $L \in \mathcal{L}^\mathrm{c}$.
\end{itemize}
Then, $B$ is actually non-decreasing.
\end{lem}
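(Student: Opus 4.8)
The plan is to identify the canonical decomposition of $B$ and then argue that it is non-decreasing, both in its jumps and in its continuous drift, by feeding carefully chosen test local martingales $L \in \Lc$ into the first hypothesis. Taking $L = 0$ shows that $B$ is a local submartingale, and since $B$ is locally bounded from above it is then a special semimartingale; write its canonical decomposition as $B = B(0) + N + V$, with $N$ a local martingale, $N(0) = 0$, and $V$ predictable of finite variation, $V(0) = 0$, the local submartingale property forcing $V$ non-decreasing. The second hypothesis eliminates the continuous martingale part of $N$: for $L \in \mathcal{L}^\mathrm{c}$ one has $[B, L] = [N^{\mathrm{c}}, L]$, since $V$ has finite variation and the purely discontinuous part $N^{\mathrm{d}}$ of $N$ has vanishing bracket with the continuous $L$; taking $L = N^{\mathrm{c}}$ gives $[N^{\mathrm{c}}] \equiv 0$, hence $N = N^{\mathrm{d}}$ is purely discontinuous and $[B, L] = \sum_{s \le \cdot} \Delta B_s \, \Delta L_s$ for every $L \in \Lc$.

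I would next prove that $\Delta B \ge 0$. Exhaust the jump times of $B$ by stopping times with disjoint graphs, each either totally inaccessible or predictable, and fix one of them, $\tau$, restricted to $\{\Delta B_\tau < 0\}$ and truncated so that $\Delta B_\tau$ is bounded. If $\tau$ is totally inaccessible, test with the compensated jump $L = \epsilon\,(\indic_{[\tau,\infty)} - A^\tau)$, where $A^\tau$ is the (continuous) compensator of $\indic_{[\tau,\infty)}$ and $\epsilon > 0$ is arbitrary; then $[B,L] = \epsilon\,\Delta B_\tau\,\indic_{[\tau,\infty)}$, so the predictable finite-variation part of $B + [B,L]$ is $V$ plus $\epsilon$ times the (continuous, non-increasing) dual predictable projection of $\Delta B_\tau\indic_{[\tau,\infty)}$, and this must be non-decreasing for every $\epsilon > 0$; letting $\epsilon \to \infty$ forces that projection to vanish, so $\Delta B_\tau\indic_{[\tau,\infty)}$ is a non-positive local martingale, hence $\equiv 0$, i.e. the restricted event is null. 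If $\tau$ is predictable, put $\eta := \indic_{\{\Delta B_\tau < 0\}} - \prob[\Delta B_\tau < 0 \mid \F_{\tau-}]$ and test with $L = c\,\eta\,\indic_{[\tau,\infty)}$, where $c$ is $\F_{\tau-}$-measurable with $0 < c < 1/\prob[\Delta B_\tau < 0 \mid \F_{\tau-}]$ (so that $\Delta L > -1$ and $\expec[\Delta L_\tau \mid \F_{\tau-}] = 0$); using $\expec[\Delta B_\tau \mid \F_{\tau-}] = \Delta V_\tau \ge 0$, the required non-decrease of the predictable finite-variation part of $B + [B,L]$ reads $\Delta V_\tau\,\big(1 - c\,\prob[\Delta B_\tau < 0 \mid \F_{\tau-}]\big) + c\,\expec[\Delta B_\tau\indic_{\{\Delta B_\tau < 0\}} \mid \F_{\tau-}] \ge 0$, and letting $c$ increase to its supremum forces $\expec[\Delta B_\tau\indic_{\{\Delta B_\tau < 0\}} \mid \F_{\tau-}] = 0$, i.e. $\Delta B_\tau \ge 0$. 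Hence $\Delta B \ge 0$ everywhere.

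It then remains to rule out a strictly decreasing continuous component. Decompose $N = N^{\mathrm{qlc}} + N^{\mathrm{acc}}$ into its quasi-left-continuous part and the part with jumps at predictable times. A test analogous to the totally inaccessible case above — now with $L$ jumping down by a fixed amount at the (by the previous step non-negative) totally inaccessible jumps of $B$, and truncating to finitely many such jumps — shows that the compensator $\check{S}$ of $S$, the sum of the jumps of $B$ at totally inaccessible times, satisfies $\ud\check{S} \le \ud V$, so $\ud\check{S} \le \ud V^{\mathrm{c}}$ (as $\check{S}$ is continuous) and $N^{\mathrm{qlc}} = S - \check{S}$ with $S$ non-decreasing and $\check{S}$ finite. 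As for $N^{\mathrm{acc}}$: each of its jumps, occurring at a predictable time, has zero conditional mean, hence its positive and negative parts have equal expectation; summing, and bounding $(\Delta N^{\mathrm{acc}})^- = (\Delta V - \Delta B)^+ \le (\Delta V)^+$ together with the local boundedness of the variation of the predictable process $V$, the jumps of $N^{\mathrm{acc}}$ are locally summable, so $N^{\mathrm{acc}}$ coincides with the (finite-variation) sum of its jumps — a purely discontinuous local martingale with jumps only at predictable times having no continuous part. Collecting everything, $B = B(0) + (V^{\mathrm{c}} - \check{S}) + \sum_{s \le \cdot}\Delta B_s$ with $V^{\mathrm{c}} - \check{S}$ non-decreasing and $\Delta B \ge 0$, so $B$ is non-decreasing.

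The step I expect to be the main obstacle is this last one: isolating and dominating the continuous ``compensator drift'' carried by the purely discontinuous local martingale part of $B$ — a purely discontinuous local martingale can perfectly well decrease continuously, as the compensated Poisson process shows — and doing so under an arbitrary filtration while staying elementary. A secondary difficulty, already visible in the second step, is the genuine asymmetry between totally inaccessible jump times, where the test local martingales may carry arbitrarily large jumps (permitting the clean $\epsilon \to \infty$ argument), and predictable jump times, where the constraint $\Delta L > -1$ is binding and forces the subtler limiting argument in the $\F_{\tau-}$-measurable parameter $c$.
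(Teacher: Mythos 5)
Your proof is correct, and the two pivotal limiting devices coincide with the paper's: at a predictable time your test jump $c\,\eta$ with $c \uparrow 1/\prob[\Delta B_\tau < 0 \mid \F(\tau-)]$ is exactly the paper's $\Delta L_{n,k}(\tau_n) = \big( (1-1/k)/p_n \big)\big(\indic_{\{\Delta B(\tau_n)<0\}} - p_n\big)$ as $k\to\infty$; at a totally inaccessible time your unbounded $\epsilon$ plays the role of the paper's $L_n$ (whose jump size $n$ is likewise unrestricted since it is positive). Where you genuinely diverge is in the organization of the final step. The paper first localizes to $B(0)=0$, $B\le b$, then isolates $B^J = \int \indic_J \,\ud B$ (the part carried on the predictable jump times) and replaces $B$ by $B' = B - B^J$, which has only totally inaccessible jumps; after proving $\Delta B'\ge 0$ and $\sum\Delta B' < \infty$ by direct submartingale estimates, it notes that $B' - \sum\Delta B'$ is a \emph{continuous} local submartingale orthogonal to every $\Lambda \in \mathcal{L}^\mathrm{c}$, hence of finite variation, hence nondecreasing — no compensators and no canonical decomposition are needed. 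You instead keep $B$ whole, work with the Doob--Meyer decomposition $B = B(0) + N + V$, and then split $N = N^{\mathrm{qlc}} + N^{\mathrm{acc}}$ (having already killed $N^{\mathrm{c}}$ via the second hypothesis, which is the same use the paper makes of it). This forces you to separately establish the local summability of $\Delta N^{\mathrm{acc}}$ and to argue that $N^{\mathrm{acc}}$ is the finite-variation sum of its jumps before you can read off the continuous drift as $V^{\mathrm{c}} - \check S$; the paper sidesteps all of this because subtracting $B^J$ up front removes the accessible jumps altogether. Both routes work, but the paper's early split of $B$ keeps the last step shorter and avoids the dual-predictable-projection machinery that your version leans on. (Do note that in step (iii) you need to make precise that $\check S_M \uparrow \check S$ gives local integrability of $S$ before invoking the decomposition $N^{\mathrm{qlc}} = S - \check S$; this is fine but should be said, whereas the paper's monotone-convergence estimate $\expec\big[\sum_{t\le n}(\Delta B(t))^-\big]$ already delivers $\sum \Delta B < \infty$ without introducing $\check S$ at all.)
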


\begin{proof} The first property implies  that $B$ itself is a local submartingale (just take $L \equiv 0$ there). Replacing $B$ by $B - B(0)$, we may assume that $B(0) = 0$. Furthermore,   standard localization arguments imply that we may take   $B$     to be bounded from above; therefore,  we shall assume in the   proof the existence of $\,b \in \Real_+$   such that $B \leq b$. This means that $B$ is an actual submartingale with   last element $B (\infty)$, and that $\expec \bra{B(T)} \geq \expec \bra{B(0)} = 0$ holds for all stopping times $T$.  

Consider now a countable collection $(\tau_n)_{\nin}$ of \emph{predictable} stopping times that exhaust the accessible jump-times of $B$. Defining also the predictable set
\[
\,J \,\dfn \,\bigcup_{\nin} \, \dbra{\tau_n, \tau_n}\,,
\]
we note that the process $$B^J \,\dfn \,\int_{(0, \cdot \,]} \indic_J(t)\, \ud B(t)\, = \,\sum_{\nin} \Delta B (\tau_n) \,\indic_{\{ \tau_n \, \leq \,\cdot \}}$$ is a local submartingale. 

\smallskip
\noindent
$\bullet~$  We shall first show that $B^J$ is nondecreasing, which amounts to showing that $\Delta B (\tau_n) \geq 0$ holds for all $\nin$. To this end, we define for each $\nin$    the $\F (\tau_n -)$-measurable random variable 
$$
p_n \, \dfn \, \prob \bra{\,\Delta B (\tau_n) < 0 \such \F (\tau_n -)\,} \indic_{\{ \tau_n < \infty \}} \,.
$$ 
On $\set{\tau_n < \infty, \, p_n = 0}$, we have $\Delta B (\tau_n) \geq 0\,$. 
For $\nin$ and $\kin$, we define $L_{n, k} \in \Lc$ to be the local martingale with $L_{n, k}(0) = 0$ and a single jump at $\,\tau_n\,$, such that   
$$
\Delta L_{n, k} (\tau_n) = \big(1 - (1/k) \big) \, \indic_{\{p_n > 0\}} \pare{(1 / p_n) \,\indic_{\{\Delta B(\tau_n) < 0\}} - 1}\,.
$$ 
We note that $B + [B, L_{n, k}] = B$ holds on $\dbraco{0, \tau_n}\,$, while on the event $\, \set{\tau_n < \infty\,, \, p_n > 0}\,$ we have  
\[
\Delta \pare{B + [B, L_{n, k}]} (\tau_n) \,=\, \Delta B (\tau_n) \big( 1 + \Delta L_{n, k} (\tau_n) \big) \,=\, \frac{\,\Delta B (\tau_n) \,}{k} - \frac{\,1 - (1/k)\,}{p_n} \pare{\Delta B (\tau_n)}^-.   
\]
The properties imposed on $B$ imply, in particular,  that $B + [B, L_{n,k}]$ is a local submartingale, bounded from above by $\,b + 1/k\,$ on $[0, \tau_n]$. It follows that on the event  $\set{\tau_n < \infty, \, p_n > 0}$  we have $\expec \bra{\,\Delta B (\tau_n) \pare{1 + \Delta L_{n,k} (\tau_n)} \such \F (\tau_n -)\,}  \geq 0$, which translates into
 $$
 \expec \bra{ \pare{\Delta B (\tau_n)}^- \such \F (\tau_n -)} \leq \,  \frac{\,\expec \bra{ \pare{\Delta B (\tau_n)}^+ \such \F (\tau_n -)}\,}{1 +  \big( (k-1) / p_n \big)} \qquad \text{on the event } \quad \set{\tau_n < \infty\,, \,\,p_n > 0}
 $$ 
 for all $\kin$. Sending $\,k \to \infty \,$, it follows that $\Delta B(\tau_n) \geq 0$ holds on $\set{\tau_n < \infty, \,p_n > 0}$.

\smallskip

\smallskip
\noindent
$\bullet~$  It remains to show that $B' \dfn B - B^J = \int_{(0, \cdot]} \indic_{\pare{\Omega \times \Real_+} \setminus J}(t) \ud B(t)$ is also a nondecreasing process. We note that $B'$ inherits some of the   properties of $B\,$: in particular, we have $\, B' (0)=0$ 
 and for every $L \in \Lc$, the process 
$$
B' + [B', L] \,=\,  
+ \int_0^\cdot \indic_{\pare{\Omega \times \Real_+} \setminus J}(t) \, \big( \ud  B(t) +  \ud[B, L](t)\big)
$$ 
is a local submartingale. We also have that $B' \leq B \leq b$, and additionally $B'$ has jumps only at totally inaccessible stopping times. To ease the notation we write $B$ instead of $B'$ for the remainder of this proof, assuming from now onwards that $B$ has jumps only at totally inaccessible stopping times. For each $\nin$, we define the local martingale 
$$\,
L_n \, \dfn \, n \sum_{\,t \, \leq \, \cdot} \, \indic_{ \{\Delta B (t) \leq - 1/n\}} - D_n\, ,
$$
 where $D_n$ is a suitable nondecreasing process with continuous paths (since the jumps of $B$ are totally inaccessible). Note that $L_n \in \Lc$, which implies that $B + [B, L_n]$ is a local submartingale for all $\nin$. Furthermore, we note that  
 $B + [B, L_n] = B + n \sum_{\,t\, \leq \,\cdot} \Delta B (t) \, \indic_{\{\Delta B (t) \leq - 1/n\}} \leq b\,$, which implies that $B + [B, L_n]$ is a true submartingale. Therefore, it follows that
\[
n  \, \expec \bra{- \sum_{t \leq n} \Delta B (t) \,\indic_{\{\Delta B (t)\leq - 1/n \}}  } \, \leq \, \expec \bra{B(n) - B(0)} \, =\, \expec \bra{B(n)  } \, \leq \, b\,, \quad \forall \,\,\nin.
\]
The monotone convergence theorem  gives now $\,\expec\big[  \sum_{\tir} \pare{\Delta B (t)}^- \big] = 0$, which implies   $\Delta B \geq 0$. Continuing, we  define for each $n \in \mathbb{N}\,$ a new local martingale 
$$
\widetilde{L}_n \dfn \widetilde{D}_n - \big(1 - (1/n) \big)\, \sum_{t \, \leq \, \cdot} \, \indic_{ \{\Delta B (t) \geq 1/n \}}\,,
$$ 
where $\widetilde{D}_n$ is an   appropriate continuous and nondecreasing process. Note that we have $\,\widetilde{L}_n \in \Lc\,$ for all $\,\nin\,$, which implies that the processes $$\,B + [B, \widetilde{L}_n] \,= \,B - \big(1 - 1/n\big) \,\sum_{\,t \,\leq \,\cdot} \Delta B(t) \, \indic_{\{\Delta B(t) \geq 1/n\}}\,, \qquad  \nin$$    are local submartingales, 
uniformly bounded from above by $b\,$. Thus, it  follows that   $\,\sum_{\,t \,\leq \,\cdot} \, \Delta B(t)\,$ is finitely-valued; and that $\widetilde{B} := B - \sum_{\,t \, \leq \,\cdot} \Delta B (t)$ is a local submartingale. Recalling that the jumps of $B$ occur only at totally inaccessible stopping times, we see that this last process $\widetilde{B}$ has continuous paths and is strongly orthogonal to all continuous-path local martingales, which means that it is of finite variation. Since it is a local submartingale, it has to be nondecreasing. It follows from this reasoning that the process $\,B = \pare{B - \sum_{\,t  \,\leq \,\cdot} \Delta B (t)} + \sum_{\,t \,\leq \, \cdot} \Delta B (t)\,$ is nondecreasing, and this concludes the argument.
\end{proof}

\subsection{Proof of Theorem \ref{thm:odt}} \label{subsec:proof_odt}

The implication $(2) \Rightarrow (1)$ is straightforward. Indeed, we fix $Y \in \Y$  and note that $$Z \,\dfn  \, \int_0^\cdot Y(t-) \,\ud X(t) + [Y, X] \,=\, Y X - X(0) 
 - \int_0^\cdot X(t) \, \ud Y(t)$$ is a $d$-dimensional local martingale. Then, if $V = V(0) + \int_{0}^\cdot \inner{H(t)}{ \ud X(t)} - C$, one computes   
\[
Y V = V(0) + \int_0^\cdot V(t-)\, \ud Y(t) + \int_0^\cdot \inner{H(t)}{\ud Z(t)} - \int_0^\cdot Y(t) \,\ud C(t),  ~~~ 
\]
which shows that $Y V$ is a local supermartingale.   

\smallskip
\noindent 
$\bullet~$ 
For the implication $(1) \Rightarrow (2)$, let us assume that $V$ is such that $Y V$ is a local supermartingale for all $Y \in \Y$. In particular, recalling the notation of \eqref{eq:defl}, we note that $(V / \hV)$ is a local supermartingale  and write this process in its Kunita-Watanabe\,/\,Doob-Meyer representation
\begin{equation}  
\label{eq:Kuni-Wata}
U \, \dfn \,   V / \hV  \, = \,V(0) + \int_0^\cdot \inner{\theta (t)}{ \ud M (t)} + N - B.
\end{equation}
Here  $\theta \in \mathcal{P} (X)$ (see Remark \ref{rem:less_check}), and $N \in \mathcal{L}^\mathrm{c}\,$ satisfies  $[N, M ] = 0\,$, whereas $B$ is a local submartingale with $B(0) = 0$  and ``purely discontinuous'',  in the sense that 
\begin{equation}  
\label{eq:*}
[B, L ] = 0 \quad \text{holds for every}\,\,\,   L \in \mathcal{L}^\mathrm{c}\,.
\end{equation}
 In particular, we note that $[B, N ] = 0$. 

 \smallskip
 \noindent {\it (i):} 
The first item of business is to show that the process $B\,$ in (\ref{eq:Kuni-Wata}) is actually {\it nondecreasing;}   for this we shall use Lemma \ref{lem:key}. Since $N + \int_0^\cdot \inner{\theta (t)}{ \ud M (t)}$ is continuous   and $U$   locally bounded from below, the process  $B = V(0) + N + \int_0^\cdot \inner{\theta (t)}{ \ud M (t)} - U$ is locally bounded from above.

Let us fix $L \in \Lc$. From $\,(1 / \hV\,) \,\Exp(L) \in \Y\,$    
we observe -- e.g., using the product rule -- that the process $$
V \cdot \big(1 / \hV\big) \,\Exp(L) \,=\, U \, \Exp(L)\,=\,\pare{V(0) + N + \int_0^\cdot \inner{\theta (t)}{ \ud M (t)} - B} \Exp(L)
$$ 
is a local supermartingale. Furthermore,   the process $\pare{N + \int_0^\cdot \inner{\theta (t)}{ \ud M (t)}} \Exp(L)$ is a local martingale, and it follows from these two observations that 
\[
B \,\Exp(L)\, = \int_0^\cdot \Exp(L) (t-) \,\ud \pare{B + [B, L]} (t)+ \int_0^\cdot \pare{B \Exp(L)} (t-) \, \ud L(t) 
\]
is a local submartingale. This, in turn, implies that 
\[
B + [B, L] \,= \int_0^\cdot \frac{1}{\Exp(L) (t-)} \, \ud \pare{B L} (t) - \int_0^\cdot B(t-) \,\ud L(t)
\]

\medskip
\noindent
is also a local submartingale. Recalling the property (\ref{eq:*}) and invoking Lemma \ref{lem:key}, we conclude from this observation  that the local submartingale  $B$ in the decomposition (\ref{eq:Kuni-Wata})  is indeed non-decreasing.

\smallskip
 \noindent {\it (ii):} 
 The second  item of business is to show that $N \equiv 0\,$ holds in \eqref{eq:Kuni-Wata}.  The crucial observation here  is that, because of $\,[N, M] = 0\,$, the product  $\,(1 / \hV) \, \Exp(n N)\,$ is an element of $\,\Y\,$ for all $\,n \in \mathbb{N}$. As a consequence,   $\,U \Exp(n N) = V \cdot (1 / \hV) \, \Exp(n N)\,$ is a local supermartingale for all $\nin$. 
 
 \smallskip
 Since $[\Exp(n N), B] = 0$ and $[\Exp(n N), M] = 0$, it follows that $\Exp(n N) (B - N)$ is a local submartingale for all $\nin$. Now we observe
\[
\Exp(n N) (B -N) = \int_0^\cdot (B - N) (t-) \,\ud \Exp(n N) (t) + \int_0^\cdot \Exp(n N) (t-) \,\ud \big(B - N - [n N, N]\big) (t), 
\]
from which it follows that $B - n [N, N]$ is a local submartingale for all $\nin$. This is only possible if $[N, N] = 0$ which, since $N(0) = 0$, implies $N \equiv 0$; as a result, \eqref{eq:Kuni-Wata} becomes 
\begin{equation} \label{eq:Kuni-Wata-2}
U \,= \,(V / \hV) \,= \,V(0) + \int_0^\cdot \inner{\theta (t)}{\ud M (t)} - B .
\end{equation}

 \noindent {\it (iii):} 
 We are now in a position to conclude. Yet another application of the integration-by-parts formula on $V  = \hV \, U$ gives, in conjunction with \eqref{eq:IE}, \eqref{eq:Kuni-Wata-2} and Theorem \ref{thm:struct}, 
 the decomposition 
\begin{align*}
V &= V(0) + \int_0^\cdot U(t-)\, \ud \hV(t) + \int_0^\cdot \hV(t )\, \ud U (t) + [\hV, U] \\
&= V(0) + \int_0^\cdot U(t-)\, \ud \hV(t) + \int_0^\cdot \hV(t ) \inner{\theta(t)}{\,\ud X (t)} - \int_0^\cdot \hV(t) \, \ud B (t)
\\
&= V(0) + \int_0^\cdot \hV(t) \inner{U(t-) \rho(t) + \theta(t)}{\ud X (t)} - \int_0^\cdot \hV(t) \, \ud B (t).
\end{align*}
Defining $\, U_{-} (t): = U(t-) \,$ for $\, t >0\,$, as well as   
\[
H \,\dfn \,\hV \big( U_- \,\rho + \theta \big)  \, \in \, \Pre(X)\, \quad \text{ and }\quad \,C\, \dfn \int_0^\cdot \hV(t) \,\ud B (t)\,,
\]
we obtain the   decomposition \eqref{eq:dec} as claimed.  \qed

\bibliography{odt}
\bibliographystyle{alpha}
\end{document}